\documentclass[runningheads]{llncs}

\usepackage[T1]{fontenc}
\usepackage{graphicx}
\usepackage{comment}
\usepackage{tikz}
\usetikzlibrary{decorations.markings,arrows.meta}
\usepackage{biblatex}
\usepackage{cancel}
\usepackage{amssymb}
\usepackage{mathrsfs}
\usepackage{bbm}
\usepackage{amsmath} 
\usepackage{array}
\usepackage[linesnumbered,ruled,vlined]{algorithm2e}
\usepackage{doi}
\addbibresource{refs.bib}

\newcommand{\T}{\mathcal{T}}
\newcommand{\underlineT}{\underline{T}}
\newcommand{\overlineT}{\overline{T}}

\newcommand{\N}{\mathbb{N}}
\newcommand{\R}{\mathbb{R}}

\newcommand{\one}{\mathbbm{1}}
\newcommand{\PP}{\mathbb{P}}
\newcommand{\B}{\mathscr{B}}
\newcommand{\U}{\mathscr{U}}

\newcommand{\C}{\mathcal{C}}

\newcommand{\A}{\mathscr{A}}

\newcommand{\ZZ}{\mathcal{Z}}
\newcommand{\Pcal}{\mathcal{P}}
\newcommand{\EE}{\mathbb{E}}

\newcommand{\uppereach}{ \rightharpoonup }
\newcommand{\lowereach}{ \rightharpoondown 
  }

\newcommand{\notlowereach}{ \not\rightharpoondown
  }

\newcommand{\coloneqq}{:=}

\DeclareMathOperator{\vect}{vec}

\DeclareMathOperator{\ch}{co}

\begin{document}

\title{Upper Expected Meeting Times for Interdependent Stochastic Agents}

\author{Marco Sangalli\orcidID{0009-0005-9047-5839} \and Erik Quaeghebeur\orcidID{0000-0003-1462-401X} \and
Thomas Krak\orcidID{0000-0002-6182-7285}}

\institute{Uncertainty in AI, Eindhoven University of Technology, The Netherlands}
\maketitle              
\begin{abstract}
We analyse the problem of meeting times for interdependent stochastic agents: random walkers whose behaviour is stochastic but controlled by their selections from some set of allowed actions, and the inference problem of when these agents are all in the same state for the first time. 
We consider the case where we are epistemically uncertain about the selected actions of these agents, and show how their behaviour can be modelled using imprecise Markov chains. This allows us to use results and algorithms from the literature, to exactly compute bounds on their meeting time, which are tight with respect to our epistemic uncertainty models. After focussing on the two-agent case, we analyse and discuss how it can be extended to an arbitrary number of agents, and how the corresponding combinatorial explosion can be partly mitigated by exploiting symmetries inherent in the problem.

\keywords{Markov Chain  \and Upper Expectation \and Meeting Time \and Imprecise Probability}
\end{abstract}

\section{Introduction}
Markov chains are a cornerstone of stochastic modelling, finding applications in fields as diverse as statistical physics~\cite{Metropolis1953}, queuing theory~\cite{Kelly1979}, and network science~\cite{VolchenkovBlanchard2008}. A fundamental question in this context is the \emph{hitting time}: the first moment at which a stochastic process enters a given set of states~\cite{norrisbook1997markov}.
When considering multiple independent Markov chains evolving on the same finite state space, a natural extension is the \emph{meeting time}, defined as the first moment at which all chains occupy the same state. Equivalently, this can be viewed as a hitting-time problem on the Cartesian product of their state spaces, with the target set being the diagonal~\cite{bullo2018meeting, norrisbook1997markov, lindvall2002lectures}. 

In this work, we generalize this problem to that of computing the expected meeting time of what we call \emph{interdependent stochastic agents}. Broadly, these are random walkers on some shared state space, whose behaviour is stochastic, but controlled by their selection of transition probabilities from some set representing their allowed actions. Crucially, we allow the selections that these agents make to depend on the state(s) of the other agent(s) in the system; this induces the interdependency.

On top of this construction, we consider the case where we are \emph{epistemically uncertain} about the selections that the agents use, and show that meaningful and conservative estimates for the expected meeting time can still be computed in this setting. We focus in particular on three such uncertainty models: the degenerate belief, which simply describes the case where we \emph{do} know the selections exactly; the vacuous belief, in which we are fully ignorant; and the degenerate-vacuous mixture, which combines these other models and may be particularly useful in some practical settings.

We show how the vacuous belief model allows us to describe the joint behaviour of the agents as an \emph{imprecise Markov chain}~\cite{HermansSkulj2014, hartfiel_seneta_1994, DeCooman2009} on their product space. This allows us to leverage known results from the literature, and in particular iterative algorithms for computing upper- and lower expected hitting times~\cite{krak2019hitting}, to exactly solve the expected meeting time problem in this setting.

We largely focus on the two-agent case, but show how the formalization can be naturally extended to an arbitrary number of agents using a $k$-fold product construction. Moreover, we discuss how we can partly mitigate the combinatorial explosion of this problem, by quotienting the (exponentially large) product space over the symmetries obtained by permuting the order of the agents.

The remainder of this paper is structured as follows. Section~\ref{sec:preli} reviews preliminary notions of precise and imprecise Markov chains, known theory about hitting- and meeting times, and 
presents an extension of Krak’s characterizations and algorithms~\cite{krak2021comphit} to hold under weaker conditions than their original assumptions.
Section~\ref{sec:upmeetingtime} formalizes the setting of two interdependent stochastic agents with epistemic uncertainty models for their selections, and the characterization of meeting times in terms of the hitting times of an imprecise Markov chain on the product space.
In Section~\ref{sec:manyagents} we extend the theory to an arbitrary number of interdependent stochastic agents, and Section~\ref{sec:conclusion} concludes the paper and gives directions for future work.

\section{Preliminaries}\label{sec:preli}
This section introduces the basic concepts of stochastic processes and Markov chains~\cite{norrisbook1997markov}, hitting and meeting times for Markov chains~\cite{norrisbook1997markov,bullo2018meeting, Doeblin1937} and imprecise Markov chains~\cite{HermansSkulj2014, hartfiel_seneta_1994, DeCooman2009,krak2019hitting}.

\subsection{Stochastic Processes and Markov Chains}
Let us denote by $\N$ the set of positive integers and define $\N_0\coloneqq\N\cup\{0\}$.
Let $\ZZ$ be a finite state space of cardinality $N\geq2$.  A discrete‐time stochastic process on $\ZZ$ is a sequence of $\ZZ$–valued random variables \((Z_n)_{n\in \N_0}\), and we write \(\PP_Z\) for its associated probability measure.  Throughout, we use the notations “\((Z_n)\)” and “\(\PP_Z\)” interchangeably to refer to this process.

The process $(Z_n)$ is called a \emph{Markov chain} if it satisfies the Markov property, i.e. if for every $n\in \N_0$ and every $z_0,\dots,z_{n+1}\in\ZZ$, it holds that
\begin{equation*}
   \PP_Z\bigl(Z_{n+1}=z_{n+1}\mid Z_0=z_0,\dots,Z_n=z_n\bigr)
   =
   \PP_Z\bigl(Z_{n+1}=z_{n+1}\mid Z_n=z_n\bigr).
\end{equation*}

The chain $(Z_n)$ is called (time‐)\emph{homogeneous} if the one‐step transition probabilities do not depend on $n$.  Equivalently, there exists a single matrix \( T = (T(z,z'))_{z,z'\in\ZZ}, \)
with
\[
   T(z,z')=\PP_Z\bigl(Z_{n+1}=z' \mid Z_n=z\bigr)
   \quad\text{for all }n\in\N_0,
\]
and each row of $T$ summing to unity.  
Similarly, a non-homogeneous Markov chain $(Z_n)$ is identified by a family of transition matrices $(T_n)$.

\subsection{Hitting and Meeting Times in the Precise Setting}\label{sec:hitandmeetime}
\subsubsection{Hitting Times.}
Let $(Z_n)$ be a homogeneous Markov chain on $\ZZ$ with transition matrix $T=(T(z,z'))_{z,z'\in\ZZ}$.  For any target set $A\subset\ZZ$, define the \emph{hitting time}
\[
  \tau_A :=\inf\{\,n\ge0: Z_n\in A\}\,\in\N_0\cup\{+\infty\}.
\]
This random variable represents the number of steps the process needs to do to first reach $A$. 
Conditioned on the chain starting at $z\in\ZZ$, the \emph{expected hitting time} is
\[
  h_A^T(z):=\EE_{\PP_Z}\bigl[\tau_A \mid Z_0=z\bigr].
\]
The quantity $h^{T}_A(z)$\footnote{The superscript $T$ does \textit{not} denote matrix transpose; rather, it indicates the dependency of the hitting time on the transition matrix $T$.} represents the expected number of steps that the process $(Z_n)$, starting in $z\in \ZZ$, takes before reaching the set $A$.
 
\begin{theorem}[\cite{norrisbook1997markov}]
    Let $A\subset \ZZ$ and let $(Z_n)$ be a homogeneous Markov chain with transition matrix $T$. The vector of expected hitting times $h^T_A=(h_A^T(z))_{z\in\ZZ}$ is the minimal non-negative solution to the following system of equations:
    \begin{equation}
        \begin{cases}
            h^T_A(z)=0 & \text{ for all $z\in A$,}\\
            h^T_A(z)=1+\sum_{z'\in \ZZ}T(z,z')h^T_A(z') & \text{ for all $z\notin A$,} 
        \end{cases}
    \end{equation}
    which can be rewritten as 
    \begin{equation}\label{eq:hittingtime_comp}
        h^T_A=\one_{A^c}+\one_{A^c}\cdot Th^T_A,
    \end{equation}
    where $\one$ is the indicator function and $\cdot$ represents the element-wise multiplication.
\end{theorem}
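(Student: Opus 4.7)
The plan is to establish the theorem in two parts: first, verify that the vector $h^T_A$ of expected hitting times is itself a non-negative solution to the system; second, show that any other non-negative solution dominates it pointwise, thereby giving minimality.

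For the first part, the boundary condition $h^T_A(z)=0$ for $z\in A$ is immediate since $\tau_A=0$ whenever $Z_0\in A$. For $z\notin A$, I would apply a first-step analysis: conditional on $Z_0=z\notin A$ we have $\tau_A\geq 1$, and by the Markov property, given $Z_1=z'$ the residual hitting time is distributed as $\tau_A$ started from $z'$. Formally, conditioning on $Z_1$ and invoking homogeneity,
\begin{equation*}
  h^T_A(z)=\EE_{\PP_Z}[\tau_A\mid Z_0=z]=\sum_{z'\in\ZZ}T(z,z')\,\EE_{\PP_Z}[\tau_A\mid Z_0=z,Z_1=z']=1+\sum_{z'\in\ZZ}T(z,z')\,h^T_A(z'),
\end{equation*}
which is exactly the recursion in the statement. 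Non-negativity is trivial since $\tau_A\geq 0$.

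For minimality, I would take any non-negative $g:\ZZ\to[0,+\infty]$ satisfying the system and show $g(z)\geq h^T_A(z)$ for every $z$. For $z\in A$ this is immediate. For $z\notin A$, the idea is to unroll the recursion. Using $g(z')=0$ for $z'\in A$, one iteration yields
\begin{equation*}
  g(z)=1+\sum_{z_1\notin A}T(z,z_1)\bigl(1+\sum_{z_2\in\ZZ}T(z_1,z_2)g(z_2)\bigr),
\end{equation*}
and inductively, after $n$ unrollings,
\begin{equation*}
  g(z)=\sum_{k=1}^{n}\PP_Z(\tau_A\geq k\mid Z_0=z)+\sum_{z_1,\dots,z_n\notin A}T(z,z_1)\cdots T(z_{n-1},z_n)g(z_n).
\end{equation*}
Here I use that the probability of not having hit $A$ within $k$ steps starting from $z$ is exactly the sum over length-$k$ paths avoiding $A$. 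Since $g\geq 0$, dropping the remainder term gives $g(z)\geq\sum_{k=1}^{n}\PP_Z(\tau_A\geq k\mid Z_0=z)$ for every $n$.

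Letting $n\to\infty$ and using the standard identity $\EE[X]=\sum_{k\geq 1}\PP(X\geq k)$ for $\N_0\cup\{+\infty\}$-valued random variables, I obtain $g(z)\geq h^T_A(z)$. The matrix form \eqref{eq:hittingtime_comp} then follows by rewriting the two cases uniformly using the indicator $\one_{A^c}$, since both sides vanish on $A$ and reproduce the recursion on $A^c$. The main technical obstacle is the minimality step: care is needed when unrolling so as to correctly identify the partial sums as tail probabilities of $\tau_A$, and to justify passing to the limit (which is unproblematic here because all terms are non-negative, allowing monotone convergence).
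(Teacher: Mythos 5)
Your proof is correct: the paper states this theorem without proof, citing Norris, and your argument---the boundary case on $A$, first-step analysis via the Markov property and homogeneity for $z\notin A$, and minimality by unrolling any non-negative solution $g$ into $g(z)\geq\sum_{k=1}^{n}\PP_Z(\tau_A\geq k\mid Z_0=z)$ plus a non-negative remainder, then letting $n\to\infty$ and using $\EE[\tau_A]=\sum_{k\geq1}\PP(\tau_A\geq k)$---is exactly the standard proof in that cited source. No gaps; the only point worth stating explicitly is that all manipulations take place in $[0,+\infty]$, so the conditioning and the passage to the limit (monotone convergence) are unproblematic, as you note.
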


\subsubsection{Meeting Times.} 
Consider a joint stochastic process $(X_n,Y_n)_{n\in \N_0}$ on \(\ZZ^2:=\ZZ \times \ZZ\).  Define the \emph{meeting time} for $X_n$ and $Y_n$ as
\[
  \mu :=\inf\{\,n\ge0: X_n = Y_n\}\,\in\N_0\cup\{+\infty\}.
\]
If the joint process starts at $(X_0,Y_0)=(x,y)$ then the \emph{expected meeting time} is
\begin{equation} \label{eq:defexpectedmeetingtime}
      m{(x,y)}\coloneqq\EE_{\PP_{(X,Y)}}\bigl[\mu \mid X_0=x,\;Y_0=y\bigr].
\end{equation}
The quantity $m{(x,y)}$ represents the expected number of steps that the two processes, when starting in $x$ and $y$ respectively, take before meeting. 
In the case where the joint process $(X_n, Y_n)$ is obtained as the independent product of two homogenous Markov chains $(X_n)$ and $(Y_n)$ with transition matrices $T$ and $S$, respectively, the expected meeting time has a particularly nice characterisation.
\begin{theorem}[{\cite{bullo2018meeting,Doeblin1937}}]
\label{thm:meeting}
Let $(X_n)$ and $(Y_n)$ be two homogenous Markov chains with transition matrices $T$ and $S$ respectively. The matrix of expected meeting times $m=(m{(x,y)})_{x,y\in\ZZ}$ is the minimal non-negative solution to the following system of equations:
    \begin{equation}\label{eq:meetingsys}
    \begin{cases}
        m{(z,z)}=0 & \text{for all $z\in \ZZ$,}\\
        m{(x,y)}=1+\sum_{x',y'\in \ZZ}T{(x,x')}S{(y, y')} m{(x',y')} & \text{for all $x\ne y$,}
    \end{cases}
    \end{equation}
which can be rewritten as 
    \begin{equation}\label{eq:meetime_matrix}
        m=J+J\cdot \left(TmS^\top\right),
    \end{equation}
    where $J$ is a $N\times N$ matrix that has zeros on the main diagonal and ones everywhere else, and $S^\top$ is the transpose of $S$.
\end{theorem}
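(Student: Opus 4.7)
The plan is to reduce the meeting-time problem to a hitting-time problem on the product space, so that the preceding hitting-time theorem can be invoked directly. The first step is to observe that, because $(X_n)$ and $(Y_n)$ are assumed independent (as is implicit in the product form $T(x,x')S(y,y')$ appearing in the target system), the joint process $(X_n,Y_n)$ is itself a homogeneous Markov chain on $\ZZ\times\ZZ$, with transition probabilities
\begin{equation*}
  P\bigl((x,y),(x',y')\bigr)\;=\;T(x,x')\,S(y,y').
\end{equation*}
This factorization is verified by conditioning on the joint history and combining the two individual Markov properties with independence of the chains.

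Next, I would identify the meeting time with the hitting time of the diagonal $D=\{(z,z):z\in\ZZ\}$ for this joint chain: the event $X_n=Y_n$ is literally the event $(X_n,Y_n)\in D$, so $\mu=\tau_D$ and hence $m(x,y)=h^P_D\bigl((x,y)\bigr)$. Applying the hitting-time theorem of Section~\ref{sec:hitandmeetime} to the joint chain with target set $D$ yields at once that $m$ is the minimal non-negative solution of the system obtained by setting $m(x,y)=0$ when $(x,y)\in D$ and $m(x,y)=1+\sum_{x',y'}P((x,y),(x',y'))\,m(x',y')$ otherwise; substituting the product form for $P$ gives exactly \eqref{eq:meetingsys}. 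For the matrix form \eqref{eq:meetime_matrix}, a short direct computation shows that
\begin{equation*}
  \sum_{x',y'\in\ZZ}T(x,x')\,m(x',y')\,S(y,y')
  \;=\;\sum_{x'}T(x,x')\sum_{y'}m(x',y')\,S^\top(y',y)
  \;=\;(TmS^\top)(x,y),
\end{equation*}
while the matrix $J$ encodes the off-diagonal ``$+1$'' and vanishing on $D$.

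The main obstacle, and really the only nontrivial step, is the careful verification that the independent product of two homogeneous Markov chains is itself a homogeneous Markov chain on the product space with the claimed Kronecker-product transition matrix; once this is established, the remainder of the theorem is a direct application of the hitting-time characterization, with minimality and non-negativity inherited verbatim. A minor presentational point is that \eqref{eq:meetime_matrix} must be read together with the implicit constraint $m(x,x)=0$ for $x\in\ZZ$, since the condensed matrix form does not by itself encode the diagonal vanishing condition expressed in the first case of \eqref{eq:meetingsys}.
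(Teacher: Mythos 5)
Your proposal is correct and takes essentially the same route as the paper, which states this as a cited result and immediately gives the same reduction: view the independent joint chain on $\ZZ^2$ with Kronecker transitions $T(x,x')S(y,y')$, identify the meeting time with the hitting time of the diagonal $D$, and apply the hitting-time theorem. Your closing caveat about the diagonal is apt and agrees with the paper's own vectorized formulation $\vect(m)=\one_{D^c}+\one_{D^c}\cdot (T\otimes S)\vect(m)$, where the indicator masks both terms so that $m(x,x)=0$ is enforced.
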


The meeting time problem can be reformulated as a hitting time problem on the product space $\ZZ^2$, with target set being \(D := \{(z, z) \mid z \in \ZZ\}\) and transition probability from state $(x,y)$ to state $(x',y ')$ being $T{(x,x')}S{(y,y')}$.

Using the operation of vectorization and the Kronecker product~\cite{Zehfuss1858, , bullo2018meeting}, denoted by the symbol $\otimes$, we may rewrite \eqref{eq:meetime_matrix} as
\begin{equation*}
    \vect(m)=\one_{D^c}+\one_{D^c}\cdot (T\otimes S)\vect(m),
\end{equation*}
where the operator $\vect$ stacks the columns of $m$ to make an $N^2$ long vector.

\subsection{Imprecise Markov Chains} \label{subsec:IMCpreli}
Let us consider a set of transition matrices $\T$ on the finite space $\ZZ$, with $|\ZZ|=N$.
Throughout this paper, we assume that $\T$ is a non-empty, compact, and convex set with rows that are independently specified—a property known in the literature as having separately specified rows (SSR)~\cite{HermansSkulj2014, krak2019hitting}. 
The assumption of having separately specified rows is equivalent to defining $N$ separate convex and compact sets of probability distributions, one for each state $z\in \ZZ$.

An imprecise Markov chain (IMC) can be viewed as a collection of Markov chains.  A natural example is
\(
\Pcal^H_{\T}
\), 
the collection of all homogeneous Markov chains whose transition matrix belongs to \(\T\). 
We are interested in computing hitting times for imprecise Markov chains, so we introduce the following upper and lower expectation operators:
\begin{align*}
    \overline{\EE}_\T[\,\cdot\,]\coloneqq\sup_{\PP\in \Pcal^H_\T}\EE_{\PP}[\,\cdot\,] \, , &\qquad 
    \underline{\EE}_\T[\,\cdot\,]\coloneqq\inf_{\PP\in \Pcal^H_\T}\EE_{\PP}[\,\cdot\,] \,.
\end{align*}
These definitions allow us to define the upper and lower expected hitting time of a target set $A\subset \ZZ$ as 
\begin{align*}
\quad\overline{h}^{\T}_A(z) \coloneqq \overline{\EE}_{\T}[\tau_A \mid Z_0 = z] \quad\text{ and } \quad\underline{h}^{\T}_A(z) \coloneqq \underline{\EE}_{\T}[\tau_A \mid Z_0 = z],
\end{align*}
respectively. 

There exists a characterization for upper expected hitting times similar to Equation~\eqref{eq:hittingtime_comp}.
Let $\overlineT:\R^{N}\to \R^{N}$ be the (possibly) nonlinear map defined for all $f\in \R^{N}$ and all $z\in \ZZ$ as 
\begin{equation*}
    [\overlineT f](z)\coloneqq\sup_{T\in \T}[Tf](z).
\end{equation*}
Then, we have that $\overline{h}^{\T}_A=\one_{A^c}+\one_{A^c}\cdot \overlineT \ \overline{h}^{\T}_A$, with a similar characterization for the lower expected hitting times~\cite{krak2019hitting}.

\subsubsection{Reachability Condition.} 
Given a set of transition matrices $\T$ we say that a set of states $\C\subseteq \ZZ$ is \textit{upper reachable}~\cite{debock2017imprecise, DeCooman2009} from $z\in \ZZ$ if there exists $n\in \N$ such that $ \ [\overlineT^n\one_{\C}](z)>0$, i.e. there exists $n\in \N$ and $T\in \T$ such that $[T^n \one_\C](z)>0$. 
We denote by $z\uppereach \C$ if $\C$ is upper reachable from $z$.
Analogously, the set $\C\subseteq \ZZ$ is \textit{lower reachable}~\cite{debock2025convergent, TJoensDeBock2021} from $z\in \ZZ$ if there exists $n\in \N$ such that $[\underlineT^n\one_\C](z)>0$.
We denote by $z\lowereach \C$ if $\C$ is lower reachable from $z$. 

Given a set of transition matrices $\T$, let $A\subset \ZZ$ be a closed target set, i.e. for all $x\in A, y\notin A$  and all $T\in \T$ it holds that $T(x,y)=0$.
We say that the reachability condition (R1) holds if
\[
\text{(R1)}:\quad \forall\,z\in A^c: z\lowereach A
\]
Under (R1), the upper expected hitting time \(\overline{h}^\T_A(z)\) is finite for all \(z\in A^c\)~\cite{krak2021comphit}.
If (R1) does not hold, then there exists a non-empty set of states $\A_\T\subset A^c$ such that
\begin{equation*}
   \A_\T:=\{z\in A^c \mid z\notlowereach A \}.
\end{equation*}
 For all states $z\in \A_T$ it naturally holds that $\overline{h}^\T_A(z)=+\infty$. This follows from the fact that, if $A$ is closed, then for all $z\in \A_\T$ there exists $T\in \T$ such that, for all $n\in \N$, $[T^n\one_A](z)=0$\footnote{This implication is not entirely trivial, but we omit the full derivation due to page limit constraints.}.

Let us denote with $\U_\T$ the set of all states of $A^c\setminus \A_\T$ that upper reach a state in $\A_\T$, i.e.
\begin{equation*}
    \U_\T \coloneqq\{z\in A^c\setminus \A_T \mid  z\uppereach \A_\T\}.
\end{equation*}
The following theorem characterizes upper hitting times for all states of $\ZZ$.
\begin{theorem}\label{th:rickcut}
    Let $\T$ be a set of transition matrices and let $\A_T$ and $\U_\T$ be as defined above. Then, for all $z\in \B_\T:=\A_\T \cup \U_\T$ it holds that $\overline{h}^\T_A(z)=+\infty$. Moreover, there exists a matrix $\tilde{T}\in \T$ such that 
    \[
    \overline{h}^\T_A|_{A^c\setminus \B_\T}=(I-\tilde{T}|_{A^c\setminus \B_\T})^{-1}{\textbf{1}},
    \]
    where $I$ denotes the identity matrix and \textbf{1} the vector of all ones. 
\end{theorem}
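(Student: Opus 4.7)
The plan is to split the statement into two claims: (i) the upper expected hitting time is infinite on $\B_\T$, and (ii) there exists a single $\tilde{T} \in \T$ giving the stated closed-form on $A^c \setminus \B_\T$. The case $x \in \A_\T$ in (i) is essentially noted in the discussion preceding the theorem: by SSR together with $x \notlowereach A$, pick a witness $T \in \T$ with $[T^n \one_A](x) = 0$ for every $n \in \N$. Under $\PP_T$ starting from $x$ the chain never reaches $A$, so $\overline{h}^\T_A(x) \geq \EE_{\PP_T}[\tau_A \mid X_0 = x] = +\infty$.

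For $x \in \U_\T$, I would unpack the definition to produce $y \in \A_\T$ and $n \in \N$ with $[\overlineT^n \one_{\{y\}}](x) > 0$. By the standard correspondence between upper reachability and the existence of a positive-probability walk under a non-homogeneous chain in $\Pcal^M_\T$ (an immediate consequence of the definition of $\overlineT$ and SSR), this yields matrices $T_0, \dots, T_{n-1} \in \T$ with $[T_0 T_1 \cdots T_{n-1} \one_{\{y\}}](x) > 0$. Let $T'' \in \T$ witness $y \in \A_\T$, so $[T''^k \one_A](y) = 0$ for all $k$. Assembling the non-homogeneous chain that uses $T_0,\dots,T_{n-1}$ for its first $n$ steps and $T''$ forever after, with positive probability this chain is at $y$ at time $n$ and then never visits $A$, so $\tau_A = +\infty$ on a set of positive probability and the corresponding expected value is $+\infty$. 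Finally, Krak's Theorem 12 (cited in Section 2.3) on the coincidence of upper hitting times over $\Pcal^H_\T$ and $\Pcal^M_\T$ transports this back to $\overline{h}^\T_A(x) = +\infty$.

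For (ii), I would first establish transitivity of upper reachability: if $x \uppereach y$ in $m$ steps and $y \uppereach z$ in $n$ steps, then monotonicity and positive homogeneity of $\overlineT$ give the pointwise bound $f_n(y)\,\one_{\{y\}} \leq f_n$ with $f_n := \overlineT^n \one_{\{z\}}$, hence $[\overlineT^{m+n} \one_{\{z\}}](x) \geq f_n(y)\,[\overlineT^m \one_{\{y\}}](x) > 0$. Consequently, no state in $A^c \setminus \B_\T$ can upper-reach $\B_\T$: otherwise, concatenating with an upper-reaching walk from $\U_\T$ to $\A_\T$ would force the state into $\U_\T$. In particular, for every $T \in \T$ and every $x \in A^c \setminus \B_\T$, the row $T(x,\cdot)$ puts zero mass on $\B_\T$. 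This reduces the problem to a hitting-time problem with target $A$ on the state space $(A^c \setminus \B_\T) \cup A$, on which every non-target state lower-reaches $A$, so condition (R1) holds. Invoking Krak's (R1)-version characterization then produces a matrix $\tilde{T} \in \T$ (extended arbitrarily on rows in $A \cup \B_\T$ using SSR) whose restriction $\tilde{T}|_{A^c \setminus \B_\T}$ is sub-stochastic with spectral radius strictly less than one and satisfies the displayed linear system.

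The main obstacle, I expect, is Part (i) for $x \in \U_\T$: the natural attempt to exhibit a single homogeneous $T \in \T$ witnessing divergence—by combining row-by-row one matrix that traces a path $x \to y$ with another that traps $y$ away from $A$—can fail when the forward closure of $y$ under the trap matrix overlaps with the prescribed path, as small examples confirm. Passing instead through non-homogeneous chains and then appealing to the $\Pcal^H_\T$--$\Pcal^M_\T$ equivalence circumvents this cleanly. The remaining pieces (transitivity and the reduction to the (R1) regime) are essentially bookkeeping once SSR is exploited, with the Krak result doing the heavy lifting of producing the optimizing matrix.
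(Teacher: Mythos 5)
Your proposal is correct and, for claim (ii), follows essentially the paper's route: the paper likewise observes that no $T\in\T$ allows a state of $A^c\setminus\B_\T$ to move into $\B_\T$ (it states $[T^n\one_{\B_\T}](x)=0$ for all $n$, which is your transitivity argument in compressed form), restricts the problem to $\ZZ\setminus\B_\T$ where (R1) holds, and invokes Krak's results — Theorem 12 of Krak et al.\ for a homogeneous optimizer $\tilde{T}$ with $\overline{h}^\T_A=h^{\tilde{T}}$, and Proposition 6 of Krak for the formula $(I-\tilde{T}|_{A^c\setminus\B_\T})^{-1}\mathbf{1}$. Where you genuinely diverge is the $\U_\T$ part of claim (i). The paper argues with the single homogeneous optimizer $\tilde{T}$: started from $x\in\U_\T$, the chain $\PP_{\tilde{T}}$ is asserted to have positive probability of hitting $\A_\T$ and hence of never reaching $A$. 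You instead construct an explicit non-homogeneous witness (a positive-probability walk $x\to y\in\A_\T$ using $T_0,\dots,T_{n-1}$, followed forever by a trapping matrix $T''$) and transfer the conclusion to the homogeneous model via the $\Pcal^H_\T$--$\Pcal^M_\T$ equivalence, which is the other half of the same Theorem 12. Your route is arguably tighter on this point: the paper's step quietly assumes that the particular optimizer $\tilde{T}$ both reaches $\A_\T$ with positive probability and traps $\A_\T$ away from $A$, neither of which follows from upper/lower reachability of an arbitrary element of $\T$, whereas your construction only needs existence of \emph{some} compatible process with these properties; the cost is an extra appeal to the homogeneous/non-homogeneous coincidence, which the paper's preliminaries already provide. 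One caveat you share with the paper rather than introduce: both arguments implicitly take the positive-probability walk from $x$ to $\A_\T$ (or the trajectory of $\PP_{\tilde{T}}$ into $\A_\T$) to avoid $A$ on the way; if every such walk passed through $A$, the event $\{\tau_A=+\infty\}$ would not be exhibited, so upper reachability of $\A_\T$ must in effect be read as reachability within $A^c$. Your remark about why a naive row-by-row splice of two homogeneous matrices can fail is apt, but note it does not conflict with the paper, since there the homogeneous witness is not spliced by hand but supplied wholesale by Krak's optimality result.
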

\begin{proof}
    It follows from Krak et al.~\cite[Theorem 12]{krak2019hitting} that there always exists a matrix $\tilde{T}\in \T$ such that $\overline{h}^\T_A=h^{\tilde{T}}$, where $h^{\tilde{T}}$ is the minimal non-negative solution of the linear system $h^{\tilde{T}}=\one_{A^c}+\one_{A^c}\cdot \tilde{T} h^{\tilde{T}}$. If $z\in \U_\T$, the Markov chain $\PP_{\tilde{T}}$ that starts in $z$ has positive probability of hitting the set $\A_\T$ thus positive probability of never reaching $A$.
    This implies that the upper expected hitting time for all $z\in \U_\T$ is infinite. 

    Observe that, for all $z\in A^c\setminus \B_\T$, for all $T\in \T$, and for all $n\in \N$, it holds that $[T^n \one_{\B_\T}](z)=0$.
    Therefore, we can restrict $\tilde{T}$ on $\tilde{\ZZ}:=\ZZ\setminus \B_\T$ as the Markov chain $\PP_{\tilde{T}}$ starting from any state in $\tilde{\ZZ}$ never leaves $\tilde{\ZZ}$.
    Thus, starting from any state in $\tilde{\ZZ}\setminus A$, the process eventually reaches $A$, and this yields the invertibility of $I-\tilde{T}|_{\tilde{\ZZ}\setminus A}$~\cite{norrisbook1997markov}.
    We are then able to conclude that $h^{\tilde{T}}|_{\tilde{\ZZ}\setminus A}=(I-\tilde{T}|_{\tilde{\ZZ}\setminus A})^{-1}{\textbf{1}}$ which is what we wanted.\qed
\end{proof}
An analogous result holds for lower expected hitting times: in this case, the definitions of $\A_\T$ and $\U_\T$ are obtained by swapping upper and lower reachability in their respective constructions.

\subsubsection{A Computational Method.}
Upper- and lower expected hitting times can be computed efficiently using iterative algorithms~\cite{krak2021comphit, krak2019hitting}. One such algorithm is the following: starting from any extreme point $T_1$ of (the convex set) $\T$, let $h_n$ be the unique solution of the linear system $h_n=\one_{A^c}+\one_{A^c}\cdot T_n h_n$ and let $T_{n+1}$ be an extreme point of $\T$ such that $\overline{T}h_{n}=T_{n+1}h_n$. Krak~\cite{krak2021comphit} showed that, under (R1), the sequence $(h_n)_{n\in\N}$ converges to $\overline{h}^{\T}_A$.
We refer to this procedure as Krak's algorithm. 

If there is a non-empty set $\A_\T$ then, as Theorem~\ref{th:rickcut} shows, we may restrict our analysis to $A^c\setminus \B_\T$ as elsewhere the upper expected hitting time is either zero or infinite. 
For each $T\in \T$ consider the restriction of $T$ on $\B_\T^c$ 
\begin{equation*}
    T|_{\B_\T^c} \;=\;\bigl(T{(i, j)}\bigr)_{\,i,j\in \B_\T^c}\;\in\R^{|\B_\T^c|\times|\B_\T^c|},
\end{equation*}
the submatrix of $T$ indexed by $\B_\T^c$. Then, the set of matrices $\T|_{\B_\T^c}$ containing the restriction on $\B_\T^c$ of all matrices in $\T$, satisfies (R1)\footnote{To compute the lower expected meeting time, we need to further restrict $\T$ excluding every matrix that has a positive probability of reaching $\B_\T$.}.
Therefore, we may apply Krak's algorithm to compute the upper expected hitting time for all nontrivial states of $\ZZ$.

Before applying Krak's algorithm, we need to first identify sets $\A_\T$ and $ \U_\T$ (in this order). To obtain the former we make use of the following lemma.
\begin{lemma}[\cite{debock2025convergent,debock2017imprecise,hermans2012characterisation}]
    Let $\C$ be a closed subset of $\ZZ$.
    Let $(\C_n)_{n\in \N_0}$ be the nondecreasing sequence given by $\C_0=\C$ and 
    \begin{equation*}
        \C_{n+1}=\C_n\cup \{z\in \ZZ\setminus \C_n \mid [\underlineT\one_{\C_n}](z)>0\},
    \end{equation*}
    for all $n\in \N_0$. Let $n^*$ be the first index for which $\C_{n^*}=\C_{n^*+1}$. Then, the set $\C_{n^*}$ contains all states of $\ZZ$ that lower reach $\C$. 
\end{lemma}
This lemma enables us to implement a straightforward algorithm to identify $\A_\T^c$ (and therefore $\A_\T$). This algorithm is a modification of Algorithm 2 in \cite{debock2017imprecise}.
\begin{algorithm}[ht]
\caption{Finding the set \(\A_{\T}\)}
\label{alg:lower-reachability}
\(\C \gets A\)\;
\Repeat{\(\mathcal{M} = \varnothing\)}{
  \(\mathcal{M} \gets \varnothing\)\;
  \ForEach{\(z \in \ZZ \setminus \C\)}{
    \If{\([\underline T\,\mathbf{1}_{\C}](z) > 0\)}{
      \(\mathcal{M} \gets \mathcal{M} \cup \{z\}\)\;
    }
  }
  \(\C \gets \C \cup \mathcal{M}\)\;
}
\Return{\(\A_\T=\C^c\)}\;
\end{algorithm}

Finding the set $\U_\T$ of states that upper reach $\A_\T$ is even more straightforward. First, we build the directed graph $G_{\overlineT}=(\ZZ, E)$, where $(x,y)\in E$ if $[\overlineT\one_{\{y\}}](x)>0$. Then, for all $z\in A^c\setminus \A_\T$, we check (e.g. by performing a breadth‑first search on \(G_{\overline T}\)) whether there exists a path in $G_{\overlineT}$ from $z$ to $\A_\T$. If such path exists then $z\in \U_\T$.

\section{Upper and Lower Meeting Times for Interdependent Stochastic Agents} \label{sec:upmeetingtime}

We now move on to the formalization of what we call \textit{stochastic agents}.
The basic idea is that we have agents---essentially random walkers on the state space $\ZZ$---whose behaviour can be influenced by making selections from some allowed set of possible actions.

Formally, for each $z\in\ZZ$, we consider a set $\mathcal{T}_z$ of probability mass functions on $\ZZ$, which we interpret as the set of allowed actions that an agent can select from, whenever they are in state $z$. 
We then say that a {stochastic agent} is a random walker on $\ZZ$ whose behaviour is determined by her specifying some selection that is compatible with these sets $\mathcal{T}_z$.
Such a selection, say $T(z,\cdot)\in\mathcal{T}_z$, thus corresponds to a probability distribution that governs the stochastic behaviour of the agent: the probability that she will be in state $z'$ at the next time point is given by $T(z,z')$.

To clean up the notation, we gather all these allowed actions in a set $\mathcal{T}$ of transition matrices, such that
\begin{equation}\label{eq:trans_mat_set_from_action_sets}
    \mathcal{T} = \bigl\{ T \text{ a trans. mat.}\,\vert\, T(z,\cdot)\in\mathcal{T}_z \text{ for all } z\in\ZZ\bigr\}\,,
\end{equation}
where $T(z,\cdot)$ denotes the $z$ row of the transition matrix $T$. In this way, any state-dependent selection can be expressed as a single element of $\mathcal{T}$.

Note that so far we have not specified how these selections are made and what they can depend on.
Indeed, a simple example would be an agent who simply selects some $T\in\mathcal{T}$ and maintains this forever; the associated walker can then be represented by a homogeneous Markov chain with transition matrix $T$. 
Another example might be an agent whose selections depend on time: these selections $(T_n)_{n\in \N_0}$ then determine a non-homogeneous Markov chain that describes the stochastic evolution of the agent.
However, the selections might also be functions of additional things.

In particular, we are interested in studying the multi-agent setting, in which the behaviour of agents is {interdependent}.
We start the exposition here with the simplest case, in which we are only dealing with two agents.
We say that they are \emph{interdependent}, when the selection that an agent makes at some point in time, depends on the state of the other agent at that same point in time (and vice versa).
For each state $y$ that one agent can be in, the other agent needs to specify a selection $T^y(x,\cdot)\in\mathcal{T}_x$ for each state $x$ that \emph{they} can be in.
Hence, the selection for this agent can be summarised as a collection $(T^y)_{y\in \ZZ}$ of transition matrices in $\mathcal{T}$.
Similarly, we denote the selection of the other agent as $(S^x)_{x\in \ZZ}$.

Given the selections of the two agents, their joint behaviour can be modelled by a stochastic process $(X_n,Y_n)$ on $\ZZ^2$, in which $X_n$ and $Y_n$ represent the states of the two agents at time $n$.
This process is Markovian and homogeneous, and satisfies
\begin{equation}\label{eq:definterdep}
    \begin{split}
        &\PP_{(X,Y)}\bigl((X_{n+1},Y_{n+1})=(x',y')\mid (X_n,Y_n)=(x,y)\bigr)=T^y(x,x')S^x(y,y')\,,
    \end{split}
\end{equation}
where we write $(T^y)$ and $(S^x)$ for the selections driving $X_n$ and $Y_n$, respectively.

To summarize the above construction, we have described a multi-agent setting, in which the behaviour of each agent is stochastic, but influenced by their selections from some set of allowed actions.
There are potentially many interesting inference problems that we could consider, but here we focus on the \emph{expected meeting time} for the two agents.

To conclude the formalization of the problem that we want to study, we now consider the case where we are \emph{epistemically uncertain} about the selections of these agents.
Clearly, this uncertainty affects our ability to compute these expected meeting times.
However, depending on the exact uncertainty model that we use to represent these beliefs, we may still be able to make useful conclusions about this quantity of interest.
In the remainder of this section, we consider this problem under various choices of such uncertainty models.

\subsection{Degenerate Belief}\label{subsec:degen_belief}

Let us begin by considering what is arguably the simplest uncertainty model: the degenerate belief, in which we are certain that the two agents use selections $(T^y)_{y\in \ZZ}$ and $(S^x)_{x\in \ZZ}$, respectively.\footnote{Clearly, in this case we are not actually ``uncertain'' at all, but this framing allows us to represent all cases that we consider in a unified framework.} 

It follows immediately that this case is described exactly by the single joint process $(X_n, Y_n)$ characterised in Equation~\eqref{eq:definterdep}.
Since this joint process is a homogenous Markov chain, it is determined by a transition matrix $\Delta$ on $\ZZ^2$.
We see from the above that the $((x,y),(x',y'))$ entry, which represents the probability of transitioning from $(x,y)$ to $(x',y')$, is given by
\begin{equation}\label{eq:degen_trans_mat_from_selections}
    \Delta\bigl((x,y),(x',y')\bigr)=T^y(x,x')S^x(y,y').
\end{equation}
Moreover, the expected meeting time for the agents coincides with the expected hitting time of the Markov chain $(X_n,Y_n)$ on target set $D=\{(z,z)\mid z\in \ZZ\}$.
The corresponding matrix of expected meeting times $m$ satisfies a system of equations analogous to \eqref{eq:meetingsys}, where each occurrence of \(T\) and \(S\) is replaced by \(T^y\) and \(S^x\), respectively; using the definition of $\Delta$ yields
\begin{equation}\label{eq:meetingDelta}
\begin{cases}
    m(x,x)=0 & \text{for all $x\in \ZZ$,}\\
    m(x,y)=1+\sum_{x',y'\in \ZZ}\Delta\bigl((x,y),(x',y')\bigr)m(x',y') & \text{for all $x\ne y$.}
\end{cases}
\end{equation}
So, in this case, we can compute the expected meeting times exactly, by solving this linear system of equations.

\subsection{Vacuous Belief}\label{subsec:vacuous}

The next uncertainty model that we consider is the \emph{vacuous belief} over the space of all possible selections; this corresponds to the case in which we are fully ignorant about which selections the agents use.

We know from our discussion in Section~\ref{subsec:degen_belief} that any specific pair of selections, say $(T^y)_{y\in \ZZ}$ and $(S^x)_{x\in \ZZ}$, determines a homogenous Markov chain on $\ZZ^2$ that is characterized by a transition matrix $\Delta$, as in Equation~\eqref{eq:degen_trans_mat_from_selections}. By following this construction for every possible pair of selections that the agents might choose, we induce a set of transition matrices on $\ZZ^2$:
    \begin{equation*}
        \T^2 \coloneqq \left\{\Delta\in \R^{{N}^2\times {N}^2}\,\Big\vert\, \Delta((x,y),(x',y')):=T^y(x,x')S^x(y,y')\right\}\,,
    \end{equation*}
    where $N=|\ZZ|$ and we are varying all possible selections $(T^y)$ and $(S^x)$ in $\T$. 
    
Each element $\Delta$ of $\T^2$ exactly corresponds to a pair of selections $(T^y)$ and $(S^x)$, which makes this set a convenient representation of the space over which we want to model our beliefs. In particular, each $\Delta\in\T^2$ induces a homogeneous Markov chain $\mathbb{P}_\Delta$, which we collect in the set $\smash{\mathcal{P}_{\T^2}}=\bigl\{\mathbb{P}_\Delta\,\vert\,\Delta\in\T^2\bigr\}$. Our vacuous beliefs are now represented by the upper (and lower) expectation operators given by
\begin{equation}\label{eq:def_vacuous_upper_lower_exp}
    \overline{\EE}_{\T^2}[\,\cdot\,]\coloneqq\sup_{\PP\in \Pcal_{\T^2}}\EE_{\PP}[\,\cdot\,],\qquad\text{and}\qquad \underline{\EE}_{\T^2}[\,\cdot\,]\coloneqq\inf_{\PP\in \Pcal_{\T^2}}\EE_{\PP}[\,\cdot\,]\,.
\end{equation}
We immediately recognize the set $\smash{\mathcal{P}_{\T^2}}$ as an \emph{imprecise Markov chain} on the product space $\ZZ^2$, that is characterised by the set $\T^2$ of transition matrices. Moreover, our vacuous beliefs in Equation~\eqref{eq:def_vacuous_upper_lower_exp} correspond simply to the upper- and lower expectation operators for this imprecise Markov chain.

This also means that we can leverage the known results from Section~\ref{subsec:IMCpreli}, provided that $\T^2$ satisfies the required regularity conditions; let us analyze how these carry over from the set $\T$ of allowed actions.
If $\T$ is compact and with SSR\footnote{Note that $\T$ always has SSR, when it is constructed as in Equation~\eqref{eq:trans_mat_set_from_action_sets}.}, the same holds for $\T^2$.
This follows from the fact that for any fixed $x$ and $y$, rows $T^y(x,\cdot)$ and $S^x(y,\cdot)$ appear only in one row of $\Delta$. Convexity does not carry over from $\T$ to $\T^2$.
This is due to the fact that the map
\[
(T^1,\dots,T^{N},\,S^1,\dots,S^{N})
\;\longmapsto\;
\Delta
\]
is bilinear 
(i.e. linear in each family \((T^y)\) or \((S^x)\) separately) 
rather than affine, and only affine maps are guaranteed to preserve convexity~\cite{rockafellar1970convex}. 
Restoring convexity by replacing \(\T^2\) with its convex hull will prove useful later. 

Moving on, we can now characterise the upper expected meeting time as
\begin{align}
    \overline{m}^{\T^2}(x,y):=\overline{\EE}_{\T^2}[\,\mu\mid (X_0,Y_0)=(x,y)]=\overline{h}^{\T^2}_D\bigl((x,y)\bigr). \label{eq:uppermeeting}
\end{align}
Since the supremum in Equation~\eqref{eq:uppermeeting} is always achieved on a extreme point~\cite{bauer1958} of $\T^2$, and since \(\T^2\) and its convex hull \(\ch(\T^2)\) share the same extreme points, the upper expected meeting time can be computed by  optimizing over \(\ch(\T^2)\), obtaining the same maximal value.

Similarly as before, let us define the upper transition operator $\overline{\Delta}:\R^{N^2}\to \R^{N^2}$ for $\ch(\T^2)$ as
\(
    [\overline{\Delta}f](z):=\sup_{\Delta\in \ch(\T^2)} [\Delta f](z),
\)
for all $f\in \R^{N^2}$ and all $z\in \ZZ^2$. Then, $\overline{m}^{\T^2}$ is the minimal non-negative solution to 
\begin{equation}\label{eq:lowermeetchara}
    \vect(\overline{m}^{\T^2})=\one_{D^c}+\one_{D^c}\cdot \overline{\Delta} \vect(\overline{m}^{\T^2}).
\end{equation}
An analogous construction characterises lower expected meeting times.

\subsubsection{Computing Upper Meeting Times} 
To compute upper (and lower) expected meeting times under our vacuous belief model, we may use Krak's algorithm~\cite{krak2021comphit} for upper (and lower) expected hitting times, restricted to $\B_{\T^2}^c$, as introduced at the end of Section~\ref{subsec:IMCpreli}. 
First of all, we need to identify the sets $\A_{\T^2}$ and $\U_{\T^2}$ that have infinite upper expected meeting time. 
To do so, we apply the algorithms from the end of Section~\ref{subsec:IMCpreli} to the product space \(\ZZ^2\), replacing \(\T\) with \(\ch(\T^2)\).

\subsubsection{The Corresponding Optimal Control Problem}

We feel that it is relevant to note that while we phrased the problem here in terms of our epistemic uncertainty about the selections of the agents, our formalization has a natural other interpretation as an optimal control problem. That is, the agents might be strategically choosing their selections---or even have these selections imposed by some external controller---in such a way as to minimize or maximize their meeting time, subject to the constraints on the allowed actions $\T$. Finding the optimal pair of selections then amounts to finding the elements of $\T^2$ that attain the extremal values in Equation~\eqref{eq:def_vacuous_upper_lower_exp}. Indeed, Krak's algorithm~\cite{krak2021comphit} may be used precisely to find such an optimal control policy.

\subsubsection{Exploiting the Symmetry.} We shall note that we can exploit the symmetry of the problem and restrict our analysis on the \textit{symmetric product space} $\ZZ^2/S_2$, where $S_2$ is the symmetric group~\cite{Rotman1995}.
Explicitly, we identify each pair $(x,y)$ with $(y,x)$, as the two agents are indistinguishable, so that the space $\ZZ^2/S_2$ consists of all unordered pairs of states of $\ZZ$.
As a consequence the optimal selections $(T^y)_{y\in \ZZ}$ and $(S^x)_{x\in \ZZ}$ coincide, i.e. $T^z=S^z$ for all $z\in \ZZ$.  
As we pass to the symmetric product space \(\ZZ^2/S_2\), \(\T^2\) naturally becomes the set of transition matrices over \(\ZZ^2/S_2\). 
For brevity, we continue to denote it by \(\T^2\); the intended domain---whether the full product space or its symmetric quotient---is always clear from context.

\begin{example}\label{ex:doublegraph}
The figure below shows the graph $G_{\overlineT}$ that encodes all possible transitions on the state space \(\ZZ = \{1,2,3,4,5\}\)  (left) and the corresponding transitions on the symmetric product space $\ZZ^2/S_2$ (right). An arrow that connects node $i$ to node $j$ in the left graph indicates that $[\overlineT\one_{\{j\}}](i)>0$; an analogous interpretation applies for the right graph with $\overline{\Delta}$ in place of $\overlineT$.
    \begin{center}
    \begin{tikzpicture}[
        xscale=1.5,yscale=1.5,
        every node/.style={draw=black,circle,fill=black!70,inner sep=2pt},
        redNode/.style={draw=red, fill=red!50},
        blueNode/.style={draw=blue, fill=blue!55},
        blackNode/.style={draw=black, fill=black},
        every label/.style={rectangle,fill=none,draw=none},
        every edge/.style={draw,bend right,looseness=0.3,
            postaction={
                decorate,
                decoration={
                    markings,
                    mark=at position 0.89 with {\arrow[#1]{Stealth}}
                }
            }
        }
    ]
        \node[blueNode,label=90:1] (1) at (4,55.3) {};
        \node[blueNode,label=90:2] (2) at (5,55.9) {};
       \node[blueNode,label=300:3] (3) at (5,54.7) {};
        \node[blueNode,label=90:4] (4) at (6,55.9) {};
        \node[blueNode,label=270:5] (5) at (6,54.7) {};
        \path (1) edge (2);
        \path (2) edge (1);
        \path (3) edge[bend left=0] (1);
        \path (3) edge[bend left=0] (2);
        \path (3) edge[relative=false,out=180,in=270,min distance=4ex] (3);
        \path (5) edge[bend left=0] (3);
        \path (4) edge[bend left=0] (5);
        \path (2) edge[bend left=0] (4);
        
        \node[redNode,label=180:{(3,3)}] (33) at (7.8,54.4) {};
        \node[redNode,label=180:{(2,2)}] (22) at (7.8,56.2) {};
        \node[redNode,label=180:{(1,1)}] (11) at (7.8,55.3) {};
        \node[blueNode,label={[xshift=0.64ex, yshift=-0.8ex]130:{(3,5)}}] (35) at (8.4,54.8) {};
        \node[blueNode,label=270:{(4,5)}] (45) at (8.4,54.3) {};
        \node[blueNode,label=270:{(2,4)}] (24) at (9.6,54.3) {};
        \node[blueNode,label={[xshift=0.649ex, yshift=-0.35ex]180:{(1,2)}}] (12) at (9,54.67) {};
        \node[blueNode,label=0:{(1,5)}] (15) at (9.8,54.9) {};
        \node[blueNode,label=0:{(3,4)}] (34) at (9.8,55.4) {};
        \node[blueNode,label=90:{(2,5)}] (25) at (9.4,56.2) {};
        \node[blueNode,label=90:{(1,3)}] (13) at (8.4,55.8) {};
        \node[blueNode,label={[xshift=-0.649ex, yshift=-0.2ex]0:{(1,4)}}] (14) at (8.96,55.75) {};
        \node[blueNode,label={[xshift=-1.05ex, yshift=-0.72ex]45:{(2,3)}}] (23) at (9,55.24) {};
        \path (13) edge[bend right=10] (12);
        \path (23) edge[bend right=33] (34);
        \path (13) edge[bend left=0] (22);
        \path (23) edge[bend left=0] (11);
        \path (35) edge[bend left=0] (33);
        \path (35) edge[bend left=0] (13);
        \path (23) edge (13);
        \path (13) edge (23);
        \path (35) edge[bend left=0] (23);
        \path (45) edge[bend left=0] (35);
        \path (24) edge[bend right=0] (45);
        \path (23) edge[bend left=0] (24);
        \path (12) edge[bend left=0] (24);
         \path (23) edge[bend left=0] (12);
         \path (12) edge[relative=false,out=240,in=305,min distance=2.6ex] (12);
        \path (24) edge[bend left=0] (15);
        \path (15) edge[bend left=0] (23);
        \path (34) edge[bend left=0] (15);
        \path (34) edge[bend left=40] (35);
        \path (25) edge[bend left] (34);
        \path (34) edge[bend left] (25);
        \path (25) edge[bend left=0] (13);
        \path (14) edge[bend left=0] (25);
        \path (23) edge[bend left=0] (14);
    \end{tikzpicture}
    \end{center}
Depending on the choice of the set of allowed actions $\T$, some pair of states in $\ZZ$ may or may not have infinite upper expected meeting time. For example, if the set $\T$ is such that $[\underlineT\one_{\{4\}}](2)=0$, i.e an agent in state $2$ can choose not to go to state $4$, then the pair $(1,2)$ would become an absorbing state for $\ZZ^2/S_2$, and naturally $(1,2)\in \A_{\T^2}$. In this case, every pair of states, e.g. $(2,3)$, that has positive lower probability of eventually hitting $(1,2)$ would be part of the set $\U_{\T^2}$ whose states also have infinite upper expected meeting time.

\end{example}

\subsection{Degenerate-Vacuous Mixtures}

The final uncertainty model that we consider is the \emph{degenerate-vacuous mixture}. Note that this is a special case of the well-known \emph{linear-vacuous mixture}~\cite{DesterckeDubois2014}, in which the linear component is represented by a degenerate (point mass) belief. Consequently, to characterize this model we need to specify both a pair of selections---which we conveniently represent through a single transition matrix $\Delta\in\T^2$---and some numerical parameter $\epsilon\in[0,1]$. The associated upper expectation operator is then
\begin{equation}
    \overline{\mathbb{E}}_{\epsilon,\Delta, \T^2}[\cdot] \coloneqq (1-\epsilon)\mathbb{E}_{\mathbb{P}_\Delta}[\cdot] + \epsilon \overline{\mathbb{E}}_{\T^2}[\cdot]\,,
\end{equation}
and similarly for the lower expectation. Since for the particular case of meeting times, both terms on the right-hand side of this expression can be computed using the results in Sections~\ref{subsec:degen_belief} and~\ref{subsec:vacuous} respectively, it follows that we can also evaluate the upper (and lower) expected meeting times under this model.

We note that this uncertainty model may be particularly useful for conservatively evaluating performance in an ``unreliable control'' setting. That is, we might consider the degenerate component $\Delta$ to represent some optimal control, e.g. a minimizing pair of selections for the meeting time of the two agents, but in which these agents sometimes deviate from the intended control in a manner that is consistent with the allowed actions $\T$, yet otherwise completely unknown to us. The parameter $\epsilon$ measures this degree of unreliability, and the associated upper expected hitting time then provides a conservative estimate for the attained performance of this system.

\section{Meeting Times for Multiple Interdependent Agents}\label{sec:manyagents}
In this section, we extend the meeting‐time problem to multiple interdependent stochastic agents seeking to all gather at a single common location.

\subsection{Imprecise Markov Chain Formulation for $k$ Stochastic Agents}
Consider $k\in \N, k>2$ interdependent stochastic agents on the same state space $\ZZ$, with $|\ZZ|=N$, and with set of transition matrices $\T$. Each agent looks at the position of all other agents to determine which selection to make. For every agent $j\in [k]:=\{1,\dots, k\}$, her selections are identified with a collection of ${N}^{k-1}$ transition matrices, $(T_j^{(z_i)_{i\in [k], i\ne j}})$ where $(z_i)_{i\in [k], i\ne j}$ represents the states of all other agents.
To find the upper (and lower) expected meeting time of these stochastic agents, we consider the joint homogenous Markov chain $(Z^1,\dots, Z^k)$ on
$\ZZ^k=\ZZ\times\dots\times \ZZ$, $k$ times.
Similarly as before, we define a set of transition matrices $\T^k$ on the space $\ZZ^k$ as follows
\begin{equation*}
    \T^k\coloneqq\Bigl\{\Delta_k\in \R^{{N}^k \times {N}^k} \,\Big\vert\,\Delta_k((z_{1:k}),(z'_{1:k}))=\prod_{j=1}^k T_j^{(z_i)_{i\ne j}}(z_j,z'_j)\Bigr\},
\end{equation*}
where, for each $j=1,\dots, k$, we vary over all possible selections of the $k$ agents. 
Thus, each matrix $\Delta_k\in \T^k$ is defined using $k{N}^{k-1}$ transition matrices on $\ZZ$. 

As before, under the assumption that $\T$ is compact, convex and has SSR, the set $\T^k$ is compact and has separately specified rows. The upper (and lower) expected meeting time of $k$ interdependent stochastic agents with set of transition matrices $\T$ can be seen as the upper (and lower) hitting time of an imprecise Markov chain on the space $\ZZ^k$ with set of transition matrices $\T^k$ and target set $D^k:=\{(z,\dots,z) \mid z\in \ZZ\}$.
As in the case with two agents, by taking the convex hull of \(\T^k\) and defining the corresponding upper (and lower) transition operator, we obtain a characterization of the upper (and lower) meeting time analogous to that in Equation~\eqref{eq:lowermeetchara}.
Moreover, this also allows us to apply Krak's algorithm~\cite{krak2021comphit} to compute the upper (and lower) expected meeting time, as taking the convex hull leaves the upper (and lower) expected meeting time unchanged. 

\subsection{Symmetry Reduction}
The state space $\ZZ^k$ grows exponentially with $k$.
Exploiting the symmetry drastically reduces  the number of states to consider as 
\begin{equation*}
|\ZZ^k/S_k|= \binom{N+k-1}{k}\ll {N}^k=|\ZZ^k|,
\end{equation*}
where $S_k$ is the symmetric group of degree $k$. In other words, in the space $\ZZ^k/S_k$ all permutations of a $k$-tuple are identified.
As in the case $k=2$, every matrix $\Delta_k:\R^{|\ZZ^k/S_k|}\to \R^{|\ZZ^k/S_k|}$ is now defined using only $N^{k-1}$ matrices from $\T$, as the $k$ families of transition matrices coincide. 

Moreover, if $2\le i<k$ stochastic agents are in the same state $z\in \ZZ$ their selected action that minimizes the meeting time is the same for all $i$ agents and depends only on the position of the other $k-i$ agents.
Similarly, these other $k-i$ agents have the same optimal selection whether in $z$ there are 1 or $i$ distinct agents. 

Hence, if we want to compute the lower expected meeting time of $k>N$ interdependent stochastic agents, we can reduce to the same problem with $N$ such agents. From the optimal matrix $\Delta_N$ we can obtain the optimal selections for each agent, which we can then use to build the optimal matrix $\Delta_k$.
Therefore, the cost to compute lower expected meeting times for $k>N$ interdependent stochastic agents is equivalent to that of $N$ such agents, thus drastically reducing the computational burden for large values of \( k \).  

\section{Conclusion and Future Work}\label{sec:conclusion}
In this work we considered the meeting-time problem for multiple interdependent stochastic agents, about whose selected actions we are epistemically uncertain. 
By establishing that these meeting times correspond to hitting the diagonal in the product space, and by focusing on specific uncertainty models for our beliefs about these agents' selected actions, we were able to recast the problem using the existing theory of imprecise Markov chains.
We used their associated upper expectation operators to derive explicit characterizations for these quantities of interest, and leveraged Krak’s iterative algorithm~\cite{krak2021comphit} to compute them.
Finally, we showed that our approach extends naturally to an arbitrary number of agents via a $k$-fold product construction, and we exploited the problem’s inherent symmetries to mitigate the exponential growth of the state space.

In future work, we hope to investigate more general interdependency mechanisms between these agents, provide a more in-depth characterisation of the joint imprecise process that describes their behaviour, and consider the case where agents may have different---possibly adversarial---objectives. It would also be interesting to generalize the problem to a continuous-time setting.

\begin{credits}
\subsubsection{\ackname} 
This work has been partly supported by the PersOn project
(P21-03), which has received funding from the Nederlandse Organisatie
voor Wetenschappelijk Onderzoek (NWO).
Moreover, we are grateful to the three anonymous reviewers for their careful reading and thoughtful comments.
Their input was essential in helping us strengthen the paper and present our ideas more clearly.
\end{credits} 
\section*{Errata}
In the published version of this paper \cite{sangalli2025meeting} the requirement that the target set $A$ must be closed was omitted from Section 2.3, “Imprecise Markov Chains”.
In this revised version of the paper we have added a formal definition of a closed set, and updated the surrounding text accordingly. Two missing edges have now been added to the right-hand graph in Example \ref{ex:doublegraph}. Moreover, some statements in the “Symmetry Reduction” section are true only for lower meeting times, thus also that part has been corrected.
\sloppy
\printbibliography

\end{document}